\numberwithin{equation}{section}
\numberwithin{figure}{section}
\theoremstyle{plain}
\newtheorem{thm}{\protect\theoremname}[section]
  \theoremstyle{plain}
  \newtheorem{lem}[thm]{\protect\lemmaname}
\def\makebbb#1{
    \expandafter\gdef\csname#1\endcsname{
        \ensuremath{\Bbb{#1}}}
}\makebbb{R}\makebbb{N}\makebbb{Z}\makebbb{C}\makebbb{H}\makebbb{E}\makebbb{H}\makebbb{P}\makebbb{B}\makebbb{Q}\makebbb{E}
  \providecommand{\lemmaname}{Lemma}
\providecommand{\theoremname}{Theorem}
\begin{document}

\title{On the optimal regularity of weak geodesics in the space of metrics
on a polarized manifold}

\author{Robert J. Berman}
\begin{abstract}
Let $(X,L)$ be a polarized compact manifold, i.e. $L$ is an ample
line bundle over $X$ and denote by $\mathcal{H}$ the infinite dimensional
space of all positively curved Hermitian metrics on $L$ equipped
with the Mabuchi metric. In this short note we show, using Bedford-Taylor
type envelope techniques developed in the authors previous work \cite{ber2},
that Chen's weak geodesic connecting any two elements in $\mathcal{H}$
are $C^{1,1}-$smooth, i.e. the real Hessian is bounded, for any fixed
time $t,$ thus improving the original bound on the Laplacins due
to Chen. This also gives a partial generalization of Blocki's refinement
of Chen's regularity result. More generally, a regularity result for
complex Monge-Ampère equations over $X\times D,$ for $D$ a pseudconvex
domain in $\C^{n}$ is given.
\end{abstract}
\maketitle

\section{Introduction}

Let $X$ be an $n-$dimensional compact complex manifold equipped
with a Kähler form $\omega$ and denote by $[\omega]$ the corresponding
cohomology class in $H^{2}(X,\R).$ The space of all Kähler metrics
in $[\omega]$ may be identified with the space $\mathcal{H}(X,\omega)$
of all Kähler potentials, modulo constants, i.e. the space of all
functions $u$ on $X$ such that 
\[
\omega_{u}:=\omega+dd^{c}u,\,\,\,\,\,\,(dd^{c}:=\frac{i}{2\pi}\partial\bar{\partial})
\]
 is positive, i.e. defines a Kähler form on $X.$ Mabuchi introduced
a natural Riemannian metric on $\mathcal{H}(X,\omega)$\emph{ \cite{mab-1},}
where the squared norm of a tangent vector $v\in C^{\infty}(X)$ at
$u$ is defined by 
\begin{equation}
g_{|u}(v,v):=\int_{X}v^{2}\omega_{u}^{n}\label{eq:mab metric intro}
\end{equation}
The main case of geometric interest is when the cohomology class $[\omega]$
is integral, which equivalently means that it can be realized as the
first Chern class $c_{1}(L)$ of an ample line bundle $L$ over the
projective algebraic manifold $X.$ Then the space $\mathcal{H}(X,\omega)$
may be identified with the space $\mathcal{H}(L)$ of all positively
curved metrics $\phi$ on the line bundle $L$ and as pointed by Donaldson
\cite{do00} the space $\mathcal{H}(L)$ may then be interpreted as
the symmetric space dual of the group $\mbox{Ham\ensuremath{(X,\omega)}}$
of Hamiltonian diffeomorphisms of $(X,\omega).$ Under this (formal)
correspondence the geodesics in $\mathcal{H}(X,\omega)$ correspond
to one-parameter subgroups in the (formal) complexification of $\mbox{Ham\ensuremath{(X,\omega)}}$
and this motivated Donaldson's conjecture concerning the existence
of geodesics in $\mathcal{H}(X,\omega),$ connecting any two given
elements. 

However, Donaldson's existence problem has turned out to be quite
subtle. In fact, according to the recent counter-examples in \cite{l-v,dar-l}
the existence of bona fide geodesic segments fails in general. On
the other hand, there always exists a (unique) \emph{weak }geodesic
$u_{t}$ connecting given points $u_{0}$ and $u_{1}$ in $\mathcal{H}(X,\omega)$
defined as follows. First recall that, by an important observation
of Semmes \cite{se} and Donaldson \cite{do00}, after a complexification
of the variable $t,$ the geodesic equation for $u_{t}$ on $X\times[0,1]$
may be written as the following complex Monge-Ampère equation on a
domain $M:=X\times D$ in $X\times\C$ for the function $U(x,t):=u_{t}(x):$
\begin{equation}
(\pi^{*}\omega+dd^{c}U)^{n\text{+1}}=0.\label{eq:ma eq for geod intro}
\end{equation}
As shown by Chen \cite{c0}, with complements by Blocki \cite{bl},
for any smoothly bounded domain $D$ in $\C$ the corresponding boundary
value problem on $M$ admits a unique solution $U$ such that $\pi^{*}\omega+dd^{c}U$
is a positive current with coefficients in $L^{\infty},$ satisfying
the equation \ref{eq:ma eq for geod intro} almost everywhere. In
particular, when $D$ is an annulus in $\C$ this construction gives
rise to the notion of a weak geodesic curve $u_{t}$ in the space
of all functions $u$ such that $\omega_{u}$ is a positive current
with coefficients in $L^{\infty}$ (the latter regularity equivalently
means that the Laplacian of $u$ is in $L^{\infty}$). In particular,
by standard linear elliptic estimates, $U$ is ``almost $C^{1,1}$''
in the sense that $U$ is in the Hölder class $C^{1,\alpha}$ for
any $\alpha<1.$ As shown by Blocki \cite{bl}, in the case when $X$
admits a Kähler metric with non-negative holomorphic bisectional curvature
Chen's regularity result can be improved to give that $U$ is $C^{1,1}-$smooth.
However, the assumption on $X$ appearing in Blocki's result is very
strong and essentially implies that $X$ is a homogenuous manifold.
In this short note we point out that, in the case when the given Kähler
class $[\omega]$ is an integral the function $u_{t}$ on $X$ is
in general, for any fixed $t,$ in $C^{1,1}(X),$ i.e. its first derivatives
are Lipschitz continuous. More precisely, the real Hessian of $u_{t}$
has bounded coefficients with a a bound which is independent of $t:$ 
\begin{thm}
\label{thm:reg of geod intro}For any integral Kähler class $[\omega]$
the weak geodesic $u_{t}$ connecting any two points $u_{0}$ and
$u_{1}$ in the space $\mathcal{H}(X,\omega)$ of $\omega-$Kähler
potentials has the property that, for any fixed $t,$ the function
$u_{t}$ is in $C^{1,1}(X).$ More precisely, the upper bound on the
sup norm on $X$ of the real Hessian of $u_{t}$ only depending on
an upper bound of sup norms of the real Hessians of $u_{0}$ and $u_{1}.$ 
\end{thm}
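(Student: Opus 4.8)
The plan is to represent Chen's weak geodesic as a Perron--Bremermann upper envelope over $M=X\times D$ and to establish the Hessian bound by a second--order comparison argument of Bedford--Taylor type, carried out fibrewise in the $X$--directions. Writing $\Phi$ for a smooth function on $M$ interpolating $u_{0}$ and $u_{1}$ on the two boundary components $X\times\partial D$, the unique solution $U$ of the homogeneous Monge--Amp\`ere equation for $U$ with these boundary values coincides with
\[
U=\sup\{\,V:\ V\ \text{is}\ \pi^{*}\omega\text{--psh on}\ M,\ V^{*}\le\Phi\ \text{on}\ \partial M\,\}.
\]
Although $\pi^{*}\omega$ is only semi--positive, degenerate along $D$, this is harmless for what follows, since every comparison will be performed by translating only in the $X$--factor and leaving the $D$--variable untouched. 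It therefore suffices to bound the real Hessian of $U$ in the $X$--directions by a constant independent of the $D$--variable; the statement for each fixed $t$ then follows by restriction.

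For the core estimate, I would fix a real tangent direction on $X$ and a small parameter $h$, and compare $U$ with the $X$--average $W(\cdot,\tau):=\tfrac12\big(U(\cdot+h,\tau)+U(\cdot-h,\tau)\big)$, the translation being given a global meaning on the compact manifold $X$ through the polarization, via the Bergman kernel and algebraic approximation of the envelope developed in \cite{ber2}. Because $\omega$ is smooth one has $\pi^{*}\omega+dd^{c}W\ge -C|h|^{2}\,\pi^{*}\omega$, so a correction of size $O(|h|^{2})$ turns $W$ into a genuine $\pi^{*}\omega$--psh competitor; and since $u_{0},u_{1}\in C^{1,1}(X)$, the averaged boundary values satisfy $W\le\Phi+\Lambda|h|^{2}$ on $\partial M$, with $\Lambda$ controlled by the sup norms of the real Hessians of $u_{0}$ and $u_{1}$. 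Feeding the corrected competitor into the envelope and invoking the comparison principle yields the two--sided second--difference inequality
\[
\big|\,U(x+h,\tau)+U(x-h,\tau)-2U(x,\tau)\,\big|\le C|h|^{2},
\]
with $C$ depending only on $\Lambda$ and the fixed geometry of $\omega$ and $D$. As the direction and $h$ are arbitrary, this bounds the full real Hessian of $U$ in the $X$--directions, including the holomorphic part $\partial^{2}/\partial z_{i}\partial z_{j}$; this is precisely what upgrades Chen's bound on the complex Laplacian to genuine $C^{1,1}$ control.

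The hard part will be the execution of the averaging step on the compact manifold $X$, where there is no global translation: this is exactly the point at which the integral, polarized hypothesis is used, through the envelope--regularity machinery of \cite{ber2}, to realize $U$ as a limit of algebraic envelopes amenable to the comparison while keeping the curvature defect of order $|h|^{2}$ and uniform in the $D$--variable $\tau$. Granting this, the displayed bound is uniform in $\tau$, and hence in $t$, so that $u_{t}\in C^{1,1}(X)$ for each fixed $t$, with sup norm of the real Hessian controlled by those of $u_{0}$ and $u_{1}$, as claimed. The same argument, performed over $X\times D$ for a pseudoconvex $D\subset\C^{n}$, gives the more general regularity statement announced in the abstract.
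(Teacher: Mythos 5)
You have the right skeleton---the Perron envelope representation of the weak geodesic over $M=X\times D$ and the Bedford--Taylor symmetric second-difference comparison carried out in the $X$-directions---and you correctly locate the crux: on a compact manifold there is no translation, and the integrality hypothesis must enter exactly there. But that crux is precisely what your proposal does not execute, and the mechanism you gesture at would not work. A ``translation'' $U(\cdot+h,\tau)$ only makes sense inside a coordinate chart, and a competitor modified only locally cannot be inserted into the global envelope $\sup\{V\}$; if instead you give the translation a global meaning by a non-holomorphic family of diffeomorphisms (exponential map, flows of real vector fields), then pulling back a $\pi^{*}\omega$-psh function destroys plurisubharmonicity with an error involving the unknown second derivatives of $U$ itself, not a defect of size $C|h|^{2}\pi^{*}\omega$ with $C$ depending only on the geometry of $\omega$. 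This is exactly the obstruction that forces Blocki \cite{bl} to assume non-negative holomorphic bisectional curvature, and a general polarized $X$ admits no non-trivial holomorphic self-maps at all, so there is no holomorphic family $F_{\lambda}$ on $X$ to use either. Your appeal to ``Bergman kernel and algebraic approximation of the envelope developed in \cite{ber2}'' is a misattribution: that is not how \cite{ber2}, or the present paper, resolves the difficulty, so the key step of your argument remains a black box.

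The mechanism that actually closes this gap is to leave $X$ altogether: lift everything to the complement $Y^{*}$ of the zero section in the total space of $L^{*}$, where a positively curved metric $\phi$ on $L$ corresponds to a psh log-homogeneous function $\chi(z,w)=\phi(z)+\log|w|^{2}$. Unlike $X$, the space $Y^{*}$ admits an abundance of global holomorphic vector fields: setting $Z=\P(L^{*}\oplus\C)$ and $L_{m}=(\pi^{*}L)\otimes\mathcal{O}(1)^{\otimes m}$, the bundle $TZ\otimes L_{m}^{\otimes k}$ is globally generated for $k$ large and $L_{m}$ is holomorphically trivial on $Y^{*}$, which yields $n+1$ holomorphic vector fields spanning the tangent space near any given point of $Y^{*}$; this is where integrality of $[\omega]$ is used. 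The holomorphic flows $F_{\lambda}$ of these fields replace your translations with no curvature defect whatsoever, and the operator $T$ (fiberwise sup over the $S^{1}$-action followed by log-homogeneous extension) converts $F_{\lambda}^{*}\chi$ back into a legitimate competitor for the envelope. With these two ingredients your comparison argument does go through, first for the Lipschitz bound and then for the symmetrized second difference $\frac{1}{2}(\chi^{\lambda}+\chi^{-\lambda})$. One further correction: the envelope comparison only yields the one-sided bound $U(x+h)+U(x-h)-2U(x)\leq C|h|^{2}$; the lower bound on the real Hessian is not automatic from the comparison, but follows by combining the upper bound with $\pi^{*}\omega+dd^{c}U\geq0$ via elementary linear algebra (as in \cite{b-t1}), since positivity of the complex Hessian controls the negative part of the real Hessian in terms of its positive part.
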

This regularity result should be compared with recent results of Darvas-Lempert
\cite{dar-l} showing that the solution $U(x,t):=u_{t}(x)$ is not,
in general, $C^{2}-$smooth up to the boundary of $M$ in (more precisely
$dd^{c}U$ is not represented by a continuous form). However, the
argument in \cite{dar-l}, which is inspired by a similar argument
in the case of $M=D$ for a pseudoconvex domain $D$ in $\C^{2}$
due to Bedford-Fornaess \cite{b-f}, does not seem to exclude the
possibility that $U$ be $C^{2}-$smooth in the\emph{ interior} of
$M.$ Anyway, the latter scenario appears to be highly unlikely in
view of the explicit counter-example of Gamelin-Sibony \cite{g-s}
to interior $C^{2}-$regularity for the case when $D$ is the unit-ball
in $\C^{2}.$ Note also that, since the bounds on the real Hessian
of $u_{t}$ are controlled by the Hessians of $u_{0}$ and $u_{1}$
the previous theorem shows that $PSH(X,\omega)\cap C^{1,1}(X)$ is
closed with respect to weak geodesics. By the very recent work of
Darvas \cite{dar} and Guedj \cite{gu} this the latter property equivalently
means that $PSH(X,\omega)\cap C^{1,1}(X)$ defines a geodesic subspace
of the metric completion of the space $\mathcal{H}$ equipped with
the Mabuchi metric.

The starting point of the proof of Theorem \ref{thm:reg of geod intro}
is the well-known Perron type envelope representation of the solution
to the Dirichlet problem for the complex Monge-Ampère operator. The
proof, which is inspired by Bedford-Taylor's approach in their seminal
paper \cite{b-t1}, proceeds by a straight-forward generalization
of the technique used in \cite{ber2} to establish the corresponding
regularity result for certain envelopes of positively curved metrics
in a line bundle $L\rightarrow X$ (which can be viewed as solutions
to a free boundary value problem for the complex Monge-Ampère equation
on $X).$ In fact, the situation here is considerably simpler than
the one in \cite{ber2} which covers the case when the line bundle
$L$ is merely big (the $C^{1,1}-$regularity then holds on the ample
locus of $L$ in $X)$ and one of the motivations for the present
note is to highlight the simplicity of the approach in \cite{ber2}
in the present situation (see also \cite{r-w} for other generalizations
of \cite{ber2}). But it should be stressed that, just as in \cite{ber2},
the results can be generalized to more general line bundles. For example,
by passing to a smooth resolution, Theorem \ref{thm:reg of geod intro}
be generalized to show that the weak geodesic connecting any two smooth
metrics with non-negative curvature current on an ample line bundle
$L$ over a singular compact normal complex variety $X$ is $C^{1,1}-$smooth
on the regular locus of $X$ (for a fixed ``time'').

As it turns out one can formulate a general result (Theorem \ref{thm:reg text}
below) which contains both Theorem \ref{thm:reg of geod intro} and
the corresponding regularity result in \cite{ber2}. In particular,
the latter result  covers the case when the domain $D$ is the unit-disc
(or more generally, the unit-ball in $\C^{n},$ where the following
more precise regularity result holds:
\begin{thm}
\label{thm:ref for ma over disc intro}For any integral Kähler class
$[\omega]$ on a compact complex manifold $X$ the solution $U$ to
the Dirichlet problem for the complex Monge-Ampère equation \ref{eq:ma eq for geod intro}
with $C^{2}-$boundary data, $\omega-$psh along the slices $\{t\}\times X,$
is $C^{1,1}-$smooth in the interior of $X\times D,$ if $D$ is the
unit-disc in $\C.$ 
\end{thm}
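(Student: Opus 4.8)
The plan is to represent $U$ as a Perron--Bremermann envelope and to reduce the desired $C^{1,1}$-bound to a single one-sided estimate, the \emph{semiconcavity} (upper bound on the real Hessian) of $U$ on interior compact sets. Writing
\[
U = \sup\{V \in PSH(M,\pi^{*}\omega) : \limsup_{q \to \zeta} V(q) \le \Phi(\zeta) \text{ for all } \zeta \in X \times \partial D\},
\]
where $\Phi$ is the $C^{2}$ boundary data, the results of Chen and Blocki cited above already give that $U$ is the continuous solution with bounded complex Laplacian solving the homogeneous Monge--Amp\`ere equation \ref{eq:ma eq for geod intro} in the interior. What remains is to upgrade the bounded Laplacian to a bound on the full real Hessian, and this is exactly the extra content of interior $C^{1,1}$-regularity.

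I would isolate the reduction step first, as it is purely linear-algebraic. At an interior point write the real Hessian $A$ of $U$ as $A = A^{\mathrm{inv}} + A^{\mathrm{anti}}$, the sum of its $J$-invariant and $J$-anti-invariant parts, $J$ being the complex structure. The condition $U \in PSH(M,\pi^{*}\omega)$ says precisely that the complex Hessian is bounded below by $-\omega$, i.e. $A^{\mathrm{inv}} \ge -c$. If I can show $U$ is semiconcave, $A \le C$ on interior compacta, then $A^{\mathrm{inv}} = \tfrac12(A + J^{T}AJ) \le C$ (since $J$ is orthogonal, $J^{T}AJ \le C$ as well), so $A^{\mathrm{inv}}$ is two-sidedly bounded, while $A^{\mathrm{anti}} = A - A^{\mathrm{inv}} \le C + c$. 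The key point is that $A^{\mathrm{anti}}$ anti-commutes with $J$, so its eigenvalues occur in $\pm$ pairs and its spectrum is symmetric about the origin; the upper bound on $A^{\mathrm{anti}}$ therefore forces the matching lower bound. Hence $A$ is two-sidedly bounded, which is $U \in C^{1,1}$, and the whole theorem comes down to semiconcavity.

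To prove semiconcavity I would use two soft properties of the envelope operator $\Phi \mapsto U_{\Phi}$: it is monotone in the data, and the arithmetic average of two competitors is again a competitor (whence also $U_{\Phi+c} = U_{\Phi}+c$ for constants $c$). Fixing an interior compact set, a coordinate chart $B$ in $X$, and a small real vector $v$, I would compare $U$ with the symmetric average $\tfrac12(U\circ T_{v} + U\circ T_{-v})$ of its translates. Each translate is $\pi^{*}\omega$-psh only up to an $O(|v|)$ error coming from the non-invariance of $\pi^{*}\omega$, but these first-order errors cancel in the symmetric average, leaving an error of size $O(|v|^{2})$ that I can absorb by adding a quadratic correction $C|v|^{2}|q|^{2}$; this is where integrality of $[\omega]$ enters, letting me trivialize $\omega$ by a smooth local potential on $B$ and pay only a quadratic price. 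On the boundary, the $C^{2}$-regularity of $\Phi$ gives $\tfrac12(\Phi\circ T_{v} + \Phi\circ T_{-v}) \le \Phi + C|v|^{2}$, so the corrected average is a competitor for the data $\Phi + C|v|^{2}$; by monotonicity and the shift-by-constants property it lies below $U + C|v|^{2}$, yielding $U(p+v)+U(p-v)-2U(p) \le C|v|^{2}$, that is $A \le C$.

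The hard part will be making this translation argument legitimate near the domain boundary $X \times \partial D$, since $T_{\pm v}$ carries points off $\partial M$ and, for components of $v$ in the $t$-direction, toward or across $\partial D$; likewise the local competitor on $B \times D$ must be glued to $U$ across $\partial B$ with all constants independent of $v$. This is exactly where the hypotheses are used: the disc $D$, unlike an annulus, has connected boundary and admits good barriers, and from the $C^{2}$ data I would build a global $\pi^{*}\omega$-psh subsolution together with a harmonic-type supersolution pinning down the boundary behaviour to second order, then use them to control the averaged competitor in a collar of $X \times \partial D$ and to perform the gluing. Carrying out this gluing uniformly in $v$, and checking it survives $t$-translations up to $\partial D$, is the only delicate point; everything else is the envelope formalism above, which is the general mechanism of \cite{ber2} underlying Theorem \ref{thm:reg text} specialized to $D$ the unit-disc.
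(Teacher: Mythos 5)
Your envelope set-up and the linear-algebra reduction are exactly the paper's: plurisubharmonicity bounds the $J$-invariant part of the real Hessian from below, the $J$-anti-invariant part has spectrum symmetric about the origin, so a one-sided (semiconcavity) bound yields the two-sided bound. The gap is in how you propose to prove semiconcavity. Your competitor $\tfrac12(U\circ T_{v}+U\circ T_{-v})-C|v|^{2}$ lives only on a coordinate chart $B\times D$, so to be a legitimate candidate in the Perron envelope it must be glued to $U$ across $\partial B$, i.e. you need it to lie below $U$ in a neighborhood of $\partial B\times D$, uniformly in $v$. But near $\partial B$ all you know a priori is a Lipschitz bound, so the discrepancy $\tfrac12(U\circ T_{v}+U\circ T_{-v})-U$ is only $O(|v|)$ there, and an $O(|v|)$ discrepancy cannot be dominated by any correction of size $O(|v|^{2})$; requiring the discrepancy to be $O(|v|^{2})$ near $\partial B$ is precisely the semiconcavity estimate you are trying to prove. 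This circularity is not the ``delicate point'' you defer (the behaviour at $X\times\partial D$ is a separate and easier matter); it is the central obstruction in the whole problem, and it is exactly why Bedford--Taylor need global automorphisms of the ball and why Blocki needs nonnegative holomorphic bisectional curvature. Indeed, if your local translation-plus-gluing scheme worked, it would prove the full Hessian bound for an \emph{arbitrary} K\"ahler class, whereas by these envelope techniques only Laplacian bounds are known in the non-integral case (compare the remarks on \cite{b-d} and \cite{d-r} in Section \ref{sub:Further-remarks}).

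Relatedly, you misidentify where integrality enters: smooth local potentials for $\omega$ exist for any K\"ahler form, integral or not, so ``trivializing $\omega$ on $B$'' uses nothing. In the paper, integrality supplies the line bundle $L$ and thereby \emph{global} holomorphic perturbations replacing your local translations: one passes to the complement $Y^{*}$ of the zero section in the total space of $L^{*}$, where candidates become psh log-homogeneous functions $\chi$; ampleness makes $TZ\otimes L_{m}^{\otimes k}$ globally generated on $Z=\P(L^{*}\oplus\C)$, producing holomorphic vector fields $V_{1},\dots,V_{n+1}$ spanning near any point of the unit circle bundle $K$; their flows $F_{\lambda}$ are holomorphic, so $\chi^{\lambda}:=F_{\lambda}^{*}\chi$ is psh \emph{exactly} (no $O(|v|)$ curvature error, hence no gluing), and the operator $T$ (fiberwise sup over the $S^{1}$-action followed by log-homogeneous extension) converts $\chi^{\lambda}$ back into a genuine global candidate. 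The boundary data is then handled directly by the $C^{1,1}$ hypothesis on $f_{\tau}$, and for the unit disc the $t$-directions are treated not by barriers but by composing with M\"obius automorphisms $G_{a}$ of $D$ — again global holomorphic maps of the domain to itself. The correct mechanism is global holomorphic symmetry manufactured from ampleness, not local translation plus gluing.
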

As pointed out by Donaldson \cite{do00} the boundary value problem
appearing in the previous theorem can be viewed as an infinite dimensional
analog of a standard boundary value problem for holomorphic discs
in the complexification of a compact Lie group $G$ or more precisely
the classical factorization theorem for loops in $G$ (recall that
the role of $G$ in the present infinite dimensional setting is played
by the group $\mbox{Ham\ensuremath{(X,\omega)}}$ of Hamiltonian diffeomorphisms).
As shown by Donaldson \cite{do1} the solution $U$ is in general
not smooth and Donaldson raised the problem of studying the singularities
of Chen's weak solution; the paper can thus be seen as one step in
this direction. 

One potentially useful consequence of the regularity results in Theorems
\ref{thm:reg of geod intro}, \ref{thm:ref for ma over disc intro}
is that, for a fixed ``time'' $t$ the differential of $u_{t}$
(which geometrically represents the connection one form of the corresponding
metric on the line bundle $L$) is Lipschitz continuous and in particular
differentiable on $X-E,$ where the exceptional set $E$ is a null
set for the Lebesgue measure. For example, it then follows from the
results in \cite{ber2} that the corresponding scaled Bergman kernel
$B_{k}(x,x)/k^{n},$ attached to high tensor powers $L^{\otimes k},$
converges when $k\rightarrow\infty$ point-wise on $X-E$ to the density
of $\omega_{u_{t}}^{n}.$ By a circle of ideas going back to Yau such
Bergman kernels can be used to approximate differential geometric
objects in Kähler geometry. Accordingly, the precise $C^{1,1}-$regularity
established in the present paper will hopefully find applications
in Kähler geometry in the future. In fact, one of the initial motivations
for writing the present note came from a very recent joint work with
Bo Berndtsson \cite{b-b} where Bergman kernel asymptotics are used
to establish the convexity of Mabuchi's K-energy along weak geodesics
and where the precise $C^{1,1}-$regularity was needed at an early
stage of the work. Eventually it turned that Chen's regularity, or
more precisely the fact that $u_{t}$ has a bounded Laplacian, is
sufficient to get the point-wise convergence of $B_{k}/k^{n}$ for
some \emph{subsequence} away from some (non-explicit) null set $E$
(see Theorem 2.1 in \cite{b-b}) which is enough to run the approximation
argument. But with a bit of imagination one could envisage future
situations where the more precise $C^{1,1}-$regularity would be needed.

Let us finally point out that in a very recent article Darvas and
Rubinstein \cite{d-r} consider psh-envelopes of functions of the
form $f=\min\{f_{1},f_{2},...,f_{m}\}.$ Such envelopes appear in
the Legendre transform type formula for weak geodesics introduced
in \cite{d-r} which has remarkable applications to the study of the
completion of the Mabuchi metric space \cite{dar}. The same technique
from \cite{ber2} we describe here implies $C^{1,1}$ regularity of
such envelopes in the case the Kähler class is integral (see the first
point in Section \ref{sub:Further-remarks}). In \cite{d-r} the authors
give a different proof of this result (still using \cite{ber2}) and
also prove a Laplacian bound in the case of a general Kähler class.

\subsection*{Acknowledgments}

I am grateful to Jean-Pierre Demailly for many stimulating and illuminating
discussions on the topic of the present note. Also thanks to Slawomir
Dinew for comments and Yanir Rubinstein for discussions related to
\cite{d-r}. This work has been supported by grants from the Swedish
and European Research Councils and the Wallenberg Foundation.

\section{$C^{1,1}-$regularity of solutions to complex Monge-Ampère equations
over products}

\subsection{\label{sub:Notation-and-preliminaries}Notation: quasi-psh functions
vs metrics on line bundles}

Here we will briefly recall the notion for (quasi-) psh functions
and metrics on line bundles that we will use. Let $(X,\omega_{0})$
be a compact complex manifold of dimension $n$ equipped with a fixed
Kähler form $\omega_{0},$ i.e. a smooth real positive closed $(1,1)-$form
on $X.$ Denote by $PSH(X,\omega_{0})$ be the space of all $\omega_{0}-$psh
functions $u$ on $X,$ i.e. $u\in L^{1}(X)$ and $u$ is (strongly)
upper-semicontinuos (usc) and 
\[
\omega_{u}:=\omega_{0}+\frac{i}{2\pi}\partial\bar{\partial}u:=\omega_{0}+dd^{c}u\geq0,
\]
 holds in the sense of currents. We will write $\mathcal{H}(X,\omega_{0})$
for the interior of $PSH(X,\omega_{0})\cap\mathcal{C}^{\infty}(X),$
i.e. the space of all Kähler potentials (w.r.t $\omega_{0}).$ In
the\emph{ integral case}, i.e. when $[\omega]=c_{1}(L)$ for a holomorphic
line bundle $L\rightarrow X,$ the space $PSH(X,\omega_{0})$ may
be identified with the space $\mathcal{H}_{L}$ of (singular) Hermitian
metrics on $L$ with positive curvature current. We will use additive
notion for metrics on $L,$ i.e. we identify an Hermitian metric $\left\Vert \cdot\right\Vert $
on $L$ with its ``weight'' $\phi.$ Given a covering $(U_{i},s_{i})$
of $X$ with local trivializing sections $s_{i}$ of $L_{|U_{i}}$
the object $\phi$ is defined by the collection of open functions
$\phi_{|U_{i}}$ defined by 
\[
\left\Vert s_{i}\right\Vert ^{2}=e^{-\phi_{|U_{i}}}.
\]
The (normalized) curvature $\omega$ of the metric $\left\Vert \cdot\right\Vert $
is the globally well-defined $(1,1)-$current defined by the following
local expression: 
\[
\omega=dd^{c}\phi_{|U_{i}}.
\]
The identification between $\mathcal{H}_{L}$ and $PSH(X,\omega_{0})$
referred to above is obtained by fixing $\phi_{0}$ and identifying
$\phi$ with the function $u:=\phi-\phi_{0},$ so that $dd^{c}\phi=\omega_{u}.$

\subsection{\label{sub:The-regularity-of}The $C^{1,1}-$regularity of weak geodesics}

Let $(X,\omega)$ be a compact Kähler manifold and $D$ a domain in
$\C^{n}.$ Set $M:=X\times D$ and denote by $\pi$ the natural projection
from $M$ to $X.$ Given a continuous function $f$ on $\partial M(=X\times\partial D)$
we define the following point-wise Perron type upper envelope on the
interior of $M:$

\begin{equation}
U:=P(f):=\sup\{V:\,\,\, V\in\mathcal{F}\},\label{eq:envelope def in section reg}
\end{equation}
 where $\mathcal{F}$ denotes the set of all $V\in PSH(M,\pi^{*}\omega)$
such that $V_{|\partial M}\leq f$ on the boundary $\partial M$ (in
a point-wise limiting sense). In the case when $D$ is a smoothly
bounded pseudoconvex domain and $f$ is $\omega-$psh in the ``$X-$directions'',
i.e. $f(\cdot,t)\in PSH(X,\omega)$ it was shown in \cite{b-d} that
$P(f)$ is continuous up to the boundary of $M$ and $U$ then coincides
with the unique solution of the Dirichlet problem for the corresponding
complex Monge-Ampère operator with boundary data $f,$ in the weak
sense of pluripotential theory \cite{b-t1}. Here we will establish
the following higher order regularity result for the envelope $P(f):$
\begin{thm}
\label{thm:reg text}Let $(X,\omega_{0})$ be an $n-$dimensional
integral compact Kähler manifold manifold and $D$ a bounded domain
in $\C^{m}$ and set $M:=X\times D.$ Then, given $f$ a function
on $\partial M$ such that $f(\cdot,\tau)$ is in $C^{1,1}(X),$ with
a uniform bound on the corresponding real Hessians, the function $u_{\tau}:=P(f)_{|X\times\{\tau\}}$
is in $C^{1,1}(X)$ and satisfies 
\[
\sup_{X}|\nabla^{2}u_{\tau}|_{\omega_{0}}\leq C,
\]
 where $|\nabla^{2}u_{\tau}|_{\omega_{0}}$ denotes the point-wise
norm of the real Hessian matrix of the function $u_{\tau}$ on $X$
defined with respect to the Kähler metric $\omega_{0}.$ Moreover,
the constant $C$ only depends on an upper bound on the sup norm of
the real Hessians of $f_{\tau}$ for $\tau\in\partial D.$ In the
case when $D$ is the unit-ball the function $U(x,\tau)$ is in $C_{loc}^{1,1}$
in the interior of $M.$
\end{thm}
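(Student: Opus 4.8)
The plan is to represent the envelope $U=P(f)$ as a supremum over a family of model functions and to exploit the integrality of $[\omega_0]$ to replace the real Hessian bound by a family of holomorphic comparison functions, exactly as in \cite{ber2}. First I would fix $\tau\in D$ and a point $x_0\in X$, and use the integral structure to produce, for each point, a large collection of ``test'' sections of high tensor powers $L^{\otimes k}$ whose normalized logarithms $\tfrac1k\log\|s_k\|^2$ are candidate competitors in the defining family $\mathcal F$; the key mechanism is that a uniform bound on the real Hessian of the boundary data $f_\tau$ translates into a uniform second-order control on these candidates through the standard identification of $PSH(X,\omega_0)$ with metrics $\mathcal H_L$ on $L$ described in Section \ref{sub:Notation-and-preliminaries}. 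The upshot is a lower barrier $U(x,\tau)\geq U(x_0,\tau)+\langle\text{linear}\rangle - C d(x,x_0)^2$ valid near $x_0$, where $C$ is controlled solely by the Hessian bound on the boundary data.

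Next I would establish the matching upper barrier. The natural device is to compare $U$ with its translates: for a holomorphic local coordinate chart or, more robustly, via the exponential-type maps adapted to $\omega_0$, one pulls back $U$ along a small displacement and averages. Because $U$ is a $\pi^*\omega_0$-psh function on $M=X\times D$ solving the homogeneous complex Monge--Amp\`ere equation in the interior (by \cite{b-d,b-t1}), one shows that the symmetrized second difference $U(x+\xi,\tau)+U(x-\xi,\tau)-2U(x,\tau)$ is bounded above by $C|\xi|^2$ uniformly in $\tau$. The two barriers together give the two-sided estimate $|U(x+\xi,\tau)+U(x-\xi,\tau)-2U(x,\tau)|\leq C|\xi|^2$, which is exactly the statement that the real Hessian of $u_\tau=U(\cdot,\tau)$ is bounded in $L^\infty$, i.e.\ $u_\tau\in C^{1,1}(X)$ with $\sup_X|\nabla^2 u_\tau|_{\omega_0}\leq C$, the constant depending only on the boundary Hessian bound as claimed.

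The main obstacle, and where the real work concentrates, is the upper barrier together with the uniformity in $\tau$. The lower barrier is a soft consequence of exhibiting competitors, but to bound the second difference from above one must genuinely use that $U$ is the envelope and not merely an element of $\mathcal F$: the comparison with displaced copies of $U$ requires that these displacements, suitably corrected, remain $\pi^*\omega_0$-psh, which costs a curvature term controlled by $\omega_0$ and forces the $C^{1,1}$ bound to absorb the geometry of $(X,\omega_0)$. Making the displacement argument independent of the slice $\tau$ is precisely what propagates the interior regularity, and here the product structure $M=X\times D$ is essential, since it lets one displace only in the $X$-directions while leaving the $D$-variable fixed.

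Finally, for the last assertion, when $D$ is the unit-ball one upgrades slice-wise regularity to joint interior $C^{1,1}_{\mathrm{loc}}$ regularity on $M$. The plan is to treat the $X$- and $D$-directions symmetrically: the homogeneous Monge--Amp\`ere equation and the rotational/homogeneity symmetry of the ball supply the missing second-order control in the $\tau$-directions and in the mixed directions, so that the full real Hessian of $U(x,\tau)$ on $X\times D$ is bounded on compact subsets of the interior. I expect this step to follow by combining the slice estimate already obtained with a parallel barrier construction in the $D$-variable, again exploiting the envelope characterization and the special geometry of the unit-ball.
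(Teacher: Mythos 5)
Your plan founders at the step you yourself identify as the crux: the upper bound on the symmetrized second difference. For the envelope comparison to work, the displaced copy of a candidate (or of $U$) must again be $\pi^{*}\omega_{0}$-psh up to a \emph{controlled} error, and this forces the displacement maps to be \emph{holomorphic}: for holomorphic $F$ one has $dd^{c}(U\circ F)=F^{*}dd^{c}U\geq-F^{*}\pi^{*}\omega_{0}$, whereas for a Riemannian exponential-type map $dd^{c}(U\circ F)$ involves the second derivatives of $U$ itself, which are exactly what you are trying to bound; there is no ``curvature term controlled by $\omega_{0}$'' that repairs this. A general polarized manifold $(X,L)$ admits no nontrivial holomorphic vector field at all, so there are no holomorphic displacements on $X$ to use. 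Note also that if your exponential-map argument were sound it would prove the $C^{1,1}$ bound for an arbitrary (non-integral) K\"ahler class with no hypothesis on $X$, which is precisely what is \emph{not} known: in that generality only Laplacian bounds are available \cite{b-d,he}, and Blocki's $C^{1,1}$ result \cite{bl} needs nonnegative holomorphic bisectional curvature. (Your appeal to the homogeneous Monge--Amp\`ere equation is also a red herring: the paper's argument uses only the envelope property of $U$, never the equation.)

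The missing idea --- and the only place where integrality genuinely enters the paper --- is the lift to the total space $Y$ of the dual bundle $L^{*}\rightarrow X$, following \cite{ber2}. Positively curved metrics $\phi$ on $L$ correspond to psh log-homogeneous functions $\chi(z,w)=\phi(z)+\log|w|^{2}$ on $Y$, and $Y^{*}$ (the complement of the zero section) carries an abundance of holomorphic vector fields: compactifying fiberwise to $Z=\P(L^{*}\oplus\C)$, the bundle $TZ\otimes L_{m}^{\otimes k}$ is globally generated for suitable $m,k$ while $L_{m}$ is holomorphically trivial on $Y^{*}$, so one obtains $V_{1},\dots,V_{n+1}$ framing a neighborhood of any point of the unit circle bundle $K$. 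Integrating the flow of $\sum\lambda_{i}V_{i}$ gives the holomorphic maps $F_{\lambda}$ (acting only in the $Y$-directions, the $D$-variable fixed, or combined with M\"obius maps $G_{a}$ when $D$ is the ball), and the pullback $\chi^{\lambda}$ --- which is neither globally defined nor log-homogeneous --- is converted back into a legitimate candidate by the operator $T$ (sup over $S^{1}$-orbits followed by log-homogeneous extension). With this, the comparison $\frac{1}{2}(\Psi^{\lambda}+\Psi^{-\lambda})\leq\Phi_{f}+C_{2}|\lambda|^{2}$ on the boundary propagates through the envelope and yields the upper second-difference bound. Your other use of integrality, the test-section lower barrier, is both misplaced and redundant: the lower Hessian bound is free, since for the convolution regularizations one has $dd^{c}\Psi_{\epsilon}\geq-C\omega_{0}$, and elementary linear algebra (the real Hessian in directions $\xi$ and $J\xi$ sums to the Levi form) converts the upper bound into the two-sided bound; moreover, producing competitors of the form $\frac{1}{k}\log\left\Vert s_{k}\right\Vert ^{2}$ that touch $U$ from below at an \emph{interior} point with controlled Hessian would essentially presuppose the Bergman-kernel asymptotics/regularity one is trying to establish.
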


\subsubsection{Proof of Theorem \ref{thm:reg text}}

In the course of the proof of the theorem we will identify an $\pi^{*}\omega-$psh
function $U$ on $M$ with a positively curved metric $\Phi$ on the
line bundle $\pi^{*}L\rightarrow M.$ The case when $D$ is a point
is the content of Theorem 1.1 in \cite{ber2} and as will be next
explained the general case can be proved in completely analogous manner.
First recall that the argument in \cite{ber2} is modelled on Bedford-Taylor's
proof of the case when $X$ is a point and $D$ is the unit-ball \cite{b-t1}
(see also Demailly's simplifications \cite{de}). The latter proof
uses that $B$ is a homogenuous domain. In order to explain the idea
of the proof of Theorem \ref{thm:reg text} first consider the case
when $(X,L)$ is \emph{homogenuous}, i.e. the group $\mbox{Aut}\mbox{ \ensuremath{(X,L)}}$
of all biholomorphic automorphisms of $X$ lifting to $L$ acts transitively
on $X.$ In particular, there exists a family $F_{\lambda}$ in $\mbox{Aut}\mbox{ \ensuremath{(X,L)}}$
parametrized by $\lambda\in\C^{n}$ such that, for any fixed point
$x\in X,$ the map $\lambda\mapsto F_{\lambda}(x)$ is a biholomorphism
(onto its image) from a sufficiently small ball centered at the origin
in $\C^{n}.$ Given a metric $\phi$ on $L$ we set $\phi^{\lambda}:=F_{\lambda}^{*}\phi.$
Similarly, given a metric $\Phi(=\Phi(x,\tau))$ on $\pi^{*}L$ we
set 
\[
\Phi^{\lambda}:=(F_{\lambda}\times I)^{*}\Phi.
\]
 Since $F_{\lambda}$ is holomorphic the metric $\Phi^{\lambda}$
has positive curvature iff $\Phi$ has positive curvature. Now to
first prove a Lipschitz bound on $P\Phi_{f},$ where $\Phi_{f}$ is
the metric on $L\rightarrow\partial M$ corresponding to the given
boundary data $f,$ we take any candidate $\Psi$ for the sup defining
$P\Phi_{f}$ and note that, on $\partial M,$ i.e. for $\tau\in\partial D:$
\begin{equation}
\Psi^{\lambda}\leq\Phi_{f}^{\lambda}\leq\Phi_{f}+C_{1}|\lambda|,\label{eq:lip bound for cand}
\end{equation}
 where $C_{1}$ only depends on the Lipschitz bounds in the $"X-$direction''
of the given function $f$ on $X\times\partial D.$ But this means
that $\Psi^{\lambda}-C_{1}|\lambda|$ is also a candidate for sup
defining $P\Phi_{f}$ and hence $\Psi^{\lambda}-C_{1}|\lambda|\leq P\Phi_{f}$
on all of $X\times D.$ Finally, taking the sup over all candidates
$\Psi$ gives, on $X\times D,$ that 
\[
(P\Phi_{f})^{\lambda}\leq(P\Phi_{f})+C_{1}|\lambda|
\]
Since this holds for any $\lambda$ and in particular for $-\lambda$
this concludes the proof of the desired Lipschitz bound on $P\Phi_{f}.$
Next, to prove the bound on the real Hessian one first replaces $\Psi^{\lambda}$
in the previous argument with $\frac{1}{2}(\Psi^{\lambda}+\Psi^{-\lambda})$
and deduces, precisely as before, that 
\[
\frac{1}{2}\left((P\Phi_{f})^{\lambda}+(P\Phi_{f})^{-\lambda}\right)\leq(P\Phi_{f})+C_{2}|\lambda|^{2},
\]
 where now $C_{2}$ depends on the upper bound in the $"X-$direction''
of the real Hessian of the function $f$ on $X\times\partial D.$
The previous inequality implies an upper bound on the real Hessians
of the local regularizations $\Psi_{\epsilon}$ of $P\Phi_{f}$ defined
by local convolutions. Moreover, since $dd^{c}\Psi_{\epsilon}\geq0$
it follows from basic linear algebra that a lower bound on the real
Hessians also holds. Hence, letting $\epsilon\rightarrow0$ shows
that $P\Phi_{f}$ is in $C_{loc}^{1,1}$ in the $"X-$direction''
with a uniform upper bound on the real Hessians (compare \cite{b-t1,de}).

Of course, a general polarized manifold $(X,L)$ may not admit even
a single (non-trivial) holomorphic vector field. But as shown in \cite{ber2}
this problem can be circumvented by passing to the total space $Y$
of the dual line bundle $L^{*}\rightarrow X,$ which does admit an
abundance of holomorphic vector fields. The starting point is the
standard correspondence between positively curved metrics $\phi$
on $L$ and psh ``log-homogenuous'' functions $\chi$ on $Y$ induced
by the following formula:

\[
\chi(z,w)=\phi(z)+\log|w|^{2},
\]
 where $z$ denotes a vector of local coordinates on $X$ and $(z,w)$
denote the corresponding local coordinates on $Y$ induced by a local
trivialization of $L.$ Accordingly, the envelope $P\Phi_{f}$ on
$X$ corresponds to an envelope construction on $Y,$ defined w.r.t
the class of psh log-homogenous functions on $Y.$ Fixing a metric
$\phi_{0}$ on $L$ we denote by $K$ the compact set in $Y$ defined
by the corresponding unit-circle bundle. By homogenity any function
$\chi$ as above is uniquely determined by its restriction to $K.$
Now, for any fixed point $y_{0}$ in $K$ there exists an $(n+1)-$tuple
of global holomorphic vector fields $V_{i}$ on $Y$ defining a frame
in a neighborhood of $y_{0}:$ 
\begin{lem}
Given any point $y_{0}$ in the space $Y^{*}$ defined as the complement
of the zero-section in the total space of $L^{*}$ there exist holomorphic
vector fields $V_{1},...,V_{n+1}$ on $Y^{*}$ which are linearly
independent close to $y_{0}.$ \end{lem}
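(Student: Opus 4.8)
The plan is to exploit the $\C^{*}$-action on the fibres of $Y=\mathrm{Tot}(L^{*})$ together with the ampleness of $L$, thereby reducing the whole statement to cohomological positivity facts on the compact base $X$. Write $p\colon Y\to X$ for the bundle projection, and let $y_{0}=(x_{0},w_{0})$ with $w_{0}\in L^{*}_{x_{0}}\setminus\{0\}$. Via $dp$ the tangent space $T_{y_{0}}Y$ splits into a one–dimensional vertical part (the fibre direction) and an $n$–dimensional horizontal part mapping isomorphically onto $T_{x_{0}}X$, so it suffices to produce one global field spanning the vertical direction and $n$ global fields spanning the horizontal one.

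For the vertical direction I would take the Euler (radial) field $E$ generating the scaling $w\mapsto\lambda w$ on the fibres of $L^{*}$: it is a globally defined holomorphic vector field on all of $Y$, vanishing exactly along the zero–section, hence nonzero at $y_{0}\in Y^{*}$. For the horizontal directions the key tool is the relative tangent sequence
\[
0\to T_{Y/X}\to T_{Y}\xrightarrow{\,dp\,}p^{*}T_{X}\to 0,
\]
in which $T_{Y/X}\cong p^{*}L^{*}$. Since the fibres of $p$ are affine lines, the higher direct images $R^{>0}p_{*}(p^{*}\mathcal{F})$ vanish and $p_{*}\mathcal{O}_{Y}=\bigoplus_{m\ge 0}L^{\otimes m}$ (functions polynomial along the fibres); by the projection formula and the Leray spectral sequence one gets $H^{i}(Y,p^{*}\mathcal{F})=\bigoplus_{m}H^{i}(X,\mathcal{F}\otimes L^{\otimes m})$, the grading being that of the fibrewise scaling. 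In particular $H^{0}(Y,p^{*}T_{X})=\bigoplus_{m}H^{0}(X,T_{X}\otimes L^{\otimes m})$ and the obstruction space is $H^{1}(Y,T_{Y/X})=\bigoplus_{m}H^{1}(X,L^{\otimes(m-1)})$.

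The idea now is that ampleness makes the high–degree graded pieces behave perfectly. I would choose $m$ so large that $T_{X}\otimes L^{\otimes m}$ is globally generated and $H^{1}(X,L^{\otimes(m-1)})=0$ (both hold for $m\gg0$ by Serre's theorems). The vanishing kills the connecting map on the degree-$m$ piece, so every $\sigma\in H^{0}(X,T_{X}\otimes L^{\otimes m})$ lifts to a genuine global holomorphic vector field $V_{\sigma}$ on $Y$ with $dp(V_{\sigma})$ equal to the horizontal section $(x,w)\mapsto\langle w^{\otimes m},\sigma(x)\rangle$. Using global generation at $x_{0}$, for any prescribed $\xi\in T_{x_{0}}X$ I can pick $\sigma$ with $\sigma(x_{0})=\xi\otimes\ell_{0}$ and $\langle w_{0}^{\otimes m},\ell_{0}\rangle=1$, so that $dp\bigl(V_{\sigma}(y_{0})\bigr)=\xi$. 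Letting $\xi$ range over a basis of $T_{x_{0}}X$ yields fields $V_{1},\dots,V_{n}$ whose horizontal parts span, and together with $V_{n+1}:=E$ one obtains $n+1$ global holomorphic fields spanning $T_{y_{0}}Y$; since the nonvanishing of their determinant is an open condition, they remain linearly independent on a neighbourhood of $y_{0}$, and restricting to $Y^{*}$ concludes.

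The step I expect to be the genuine obstacle is the lifting of the horizontal datum $\sigma\in H^{0}(X,T_{X}\otimes L^{\otimes m})$ on the base to an honest vector field on the total space: there is no canonical horizontal lift (that would require a connection), the ambiguity being precisely a vertical field, so one must know that the gluing obstruction in $H^{1}(Y,T_{Y/X})=\bigoplus_{m}H^{1}(X,L^{\otimes(m-1)})$ vanishes in the relevant degree. This is exactly where the ampleness of $L$ enters essentially; the remaining ingredients — the Euler field for the vertical direction and the surjectivity onto $T_{x_{0}}X$ — are routine once $m$ is taken large.
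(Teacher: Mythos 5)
Your argument is correct, but it takes a genuinely different route from the paper's proof. The paper does not decompose $T_{Y}$ at all: it passes to the fiberwise compactification $Z:=\P(L^{*}\oplus\C)$, observes that $L_{m}:=(\pi^{*}L)\otimes\mathcal{O}(1)^{\otimes m}$ is ample on $Z$ for $m$ sufficiently large while being holomorphically trivial on $Y^{*}$, and then invokes the Serre-type fact that the rank $n+1$ bundle $TZ\otimes L_{m}^{\otimes k}$ is globally generated for $k\gg0$; sections spanning the fiber at $y_{0}$, read through the trivialization of $L_{m}$ over $Y^{*}$, are already the desired vector fields. You instead stay on the non-compact total space and replace this single global-generation step by a vertical/horizontal analysis: the Euler field handles the fiber direction, and the horizontal directions are obtained by lifting weight-$m$ sections of $p^{*}T_{X}$ through the relative tangent sequence, with Serre vanishing of the relevant graded piece of $H^{1}(Y,T_{Y/X})$ killing the connecting-map obstruction, and global generation of $T_{X}\otimes L^{\otimes m}$ providing surjectivity onto $T_{x_{0}}X$. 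Both proofs ultimately cash in ampleness of $L$ through Serre's theorems; the paper's compactification trick buys brevity --- no exact sequence, no grading, no obstruction theory --- while your argument makes the geometry of the two directions explicit, produces fields of pure weight for the fiberwise $\C^{*}$-action, and in fact yields fields defined on all of $Y$ rather than only on $Y^{*}$.

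One step you should tighten: the identity $H^{i}(Y,p^{*}\mathcal{F})=\bigoplus_{m}H^{i}(X,\mathcal{F}\otimes L^{\otimes m})$ holds as stated in the algebraic category, where $p$ is an affine morphism and $p_{*}\mathcal{O}_{Y}=\bigoplus_{m\geq0}L^{\otimes m}$; analytically, $p_{*}\mathcal{O}_{Y}$ is a completion of this sum, since holomorphic functions on the fibers are convergent power series rather than polynomials, so the direct-sum decomposition of cohomology is not literally correct. This is harmless here: since $L$ is ample, $X$ is projective, all the data are algebraic by GAGA, and an algebraic vector field is in particular holomorphic; alternatively, use $\C^{*}$-equivariance to reduce the lifting of your weight-$m$ section to a finite-dimensional extension problem on $X$ itself (the weight-$m$ part of the relative tangent sequence is a twist of the Atiyah sequence of $L$), whose obstruction class lies in $H^{1}(X,L^{\otimes(m-1)})$ up to an immaterial shift of the index, and hence vanishes for $m\gg0$. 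With that remark in place your proof is complete.
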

\begin{proof}
This follows from Lemma 3.7 in \cite{ber2}. For completeness and
since we do not need the explicit estimates furnished by Lemma 3.7
in \cite{ber2} we give a short direct proof here. Set $Z:=\P(L^{*}\oplus\C),$
viewed as the fiber-wise $\P^{1}-$compactification of $Y.$ Denote
by $\pi$ the natural projection from $Z$ to $X$ and by $\mathcal{O}(1)$
the relative (fiberwise) hyper plane line bundle on $Z.$ As is well-known,
for any sufficiently positive integer the line bundle $L_{m}:=(\pi^{*}L)\otimes\mathcal{O}(1)^{\otimes m}$
on $Z$ is ample and holomorphically trivial on $Y^{*}.$ As a consequence,
the rank $n+1-$ vector bundle $E:=TZ\otimes L_{m}^{\otimes k}$ is
globally generated for $k$ sufficiently large, i.e. any point $z_{0}$
in $Z$ there exists global holomorphic sections $S_{1},.,,,S_{n+1}$
spanning $E_{|z_{0}}.$ Since, $L_{m}$ is holomorphically trivial
on $Y^{*}\subset Z$ this concludes the proof.
\end{proof}
Now, integrating the (short-time) flow of the holomorphic vector field
$V(\lambda):=\sum\lambda_{i}V_{i}$ gives a family of holomorphic
maps $F_{\lambda}(y)$ defined for $y\in K$ and $\lambda$ in a sufficiently
small ball $B$ centered at the origin in $\C^{n+1}$ such that $\lambda\mapsto F_{\lambda}(y_{0})$
is a biholomorphism. However, the problem is that the corresponding
function $\chi^{\lambda}:=F_{\lambda}^{*}\chi$ is only defined in
a neighborhood of $K$ in $Y$ (and not log-homogenuous). But this
issue can be bypassed by replacing $\chi^{\lambda}$ with a new function
that we will denote by $T(\chi^{\lambda}),$ where $T(f),$ for $f$
a function on $K,$ is obtained by first taking the sup of $f$ over
the orbits of the standard $S^{1}-$ action on $Y$ to get an $S^{1}-$invariant
function $g:=\hat{f}$ and then replacing $g$ with its log-homogenuous
extension $\tilde{g},$ i.e. 
\[
T(f):=\widetilde{\left(\widehat{\chi^{\lambda}}\right)}.
\]
The following lemma follows from basic properties of plurisubharmonic
functions (see \cite{ber2} for a proof):
\begin{lem}
If $f$ is the restriction to the unit-circle bundle $K\subset Y$
of a psh function, then $T(f)$ is a psh log homogenuous function
on $Y$
\end{lem}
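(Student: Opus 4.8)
The plan is to prove the statement in three movements, reducing it entirely to the positivity of a single weight on $X$ and isolating the homogenization as the real content. I would first exploit the correspondence between log-homogeneous functions on $Y$ and weights on $L$. Writing $\chi_0:=\log\|\cdot\|^2_{\phi_0}$ for the globally defined psh function that cuts out $K=\{\chi_0=0\}$, every log-homogeneous function can be written as $\psi\circ\pi+\chi_0$ with $\psi$ a function on $X$, and since $dd^c\chi_0=\pi^*\omega_0$ such a function is psh precisely when $dd^c\psi+\omega_0\geq0$, i.e. when $\psi\in PSH(X,\omega_0)$. Because $\widehat f$ is $S^1$-invariant it descends to the function $h(x):=\sup_{K_x}f$ on $X$ (the supremum of $f$ over the circle fibre), and unwinding the definition of the log-homogeneous extension gives $T(f)=h\circ\pi+\chi_0$. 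Thus the lemma is equivalent to the single assertion $h\in PSH(X,\omega_0)$, and the whole task is to show that this fibrewise supremum defines an $\omega_0$-psh potential.

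The passage $f\mapsto\widehat f$ is then harmless and I would dispose of it immediately. Extending $f$ to a psh function $F$ near $K$ and writing $R_\theta(y)=e^{i\theta}y$ for the rotation action, each $R_\theta^*F$ is psh (being the pullback of $F$ under a biholomorphism of $Y^*$), so $\widehat F=\sup_\theta R_\theta^*F$ is psh; as the family is parametrized by the compact group $S^1$ and $F$ is usc, the supremum is already usc and no upper-semicontinuous regularization is needed. Hence $\widehat F$ is an honest $S^1$-invariant psh function on $Y^*$ whose restriction to $K$ is $h\circ\pi$. I would also rewrite $h$ by the maximum principle: along each $\C^*$-orbit $\widehat F$ is subharmonic and $\{\chi_0\leq0\}$ meets the fibre in a disc with boundary $K_x$, so $h(x)=\sup_{\{\chi_0\leq0\}\cap\pi^{-1}(x)}\widehat F$, exhibiting $h$ as a supremum over the fibres of the pseudoconvex unit-disc bundle $\{\chi_0<0\}$.

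The remaining and genuinely delicate step is to show that this fibrewise supremum of the $S^1$-invariant psh function $\widehat F$ is $\omega_0$-psh, equivalently that the log-homogeneous extension linearizing the fibrewise-convex profile of $\widehat F$ to slope one at $K$ is psh. This is where I expect the main obstacle: the unit discs have conformal radius varying non-holomorphically over $X$, so the clean fact that a supremum over a \emph{fixed} fibre of a psh function is psh does not apply directly, and at the level of the complex Hessian a naive fibrewise estimate yields only $\omega_0+dd^c h\geq(1-\partial_{\chi_0}\widehat F)\,\omega_0$, so positivity is not forced by fibre-convexity alone. The way to overcome this is to use the cone structure of the problem: $Y^*$ is the punctured total space of the negative bundle $L^*$ over the \emph{compact} manifold $X$, i.e. a punctured affine cone (here ampleness of $L$ is essential), and on such a cone the asymptotic fibre slopes of global psh functions are themselves $\omega_0$-psh weights, which is exactly what compensates the slope term above. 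I would carry out this last estimate by the homogenization technique of \cite{ber2}, to which the lemma is attributed, rather than reproducing those computations here.
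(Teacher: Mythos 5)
You should note first that the paper does not actually prove this lemma: it dispatches it with the sentence ``follows from basic properties of plurisubharmonic functions (see \cite{ber2} for a proof)'', so your attempt has to stand on its own. Its first two movements are correct and are the right normalization: writing $\chi_{0}$ for the log-homogeneous function of $\phi_{0}$, one has $dd^{c}\chi_{0}=\pi^{*}\omega_{0}$ on $Y^{*}$, hence $T(f)=h\circ\pi+\chi_{0}$ with $h(x)=\sup_{K_{x}}\widehat{f}$, and $T(f)$ is psh exactly when $h\in PSH(X,\omega_{0})$; the passage $f\mapsto\widehat{f}$ (sup over a compact group of pullbacks by biholomorphisms) is also handled correctly. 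But your proposal then stops precisely where the lemma begins: the assertion that the fibrewise supremum $h$ is $\omega_{0}$-psh \emph{is} the entire content, and you do not prove it --- you defer it to ``the homogenization technique of \cite{ber2}''. Outsourcing the only nontrivial step to the very reference the lemma cites is a genuine gap, not a presentational shortcut.

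Moreover, this particular gap cannot be closed, because the lemma as stated is false, and the slope term $(1-\partial_{\chi_{0}}\widehat{F})$ that you correctly isolated is a fatal obstruction rather than a technical one. Take $v\in C^{\infty}(X)$ with $\omega_{0}+dd^{c}v\geq0$ but degenerate at some point (e.g. $v=c^{*}w$ with $w$ nonconstant and $c^{*}$ the largest $c$ such that $\omega_{0}+c\,dd^{c}w\geq0$), and set $u:=2v$, so that $dd^{c}u+2\omega_{0}\geq0$ while $dd^{c}u+\omega_{0}$ has a negative eigenvalue somewhere. Then $g:=u\circ\pi+2\chi_{0}$ satisfies $dd^{c}g=\pi^{*}(dd^{c}u+2\omega_{0})\geq0$, so it is psh on $Y^{*}$ (and extends as a psh function across the zero section, being bounded above there); it is $S^{1}$-invariant, with $f:=g_{|K}=u\circ\pi_{|K}$ and fibrewise slope identically $2$. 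Yet $T(f)=u\circ\pi+\chi_{0}$ has $dd^{c}T(f)=\pi^{*}(dd^{c}u+\omega_{0})\not\geq0$. Explicitly, for $X=\P^{1}$, $L=\mathcal{O}(1)$, $Y^{*}=\C^{2}\setminus\{0\}$ and $\phi_{0}$ the Fubini--Study weight, the function $g=\log\bigl((|z_{0}|^{2}+|z_{1}|^{2})^{4}/(|z_{0}|^{4}+|z_{1}|^{4})\bigr)$ is psh on $\C^{2}$, its restriction to the unit sphere $K$ is $f=-\log(|z_{0}|^{4}+|z_{1}|^{4})$, and $T(f)=\log\bigl((|z_{0}|^{2}+|z_{1}|^{2})^{3}/(|z_{0}|^{4}+|z_{1}|^{4})\bigr)$ fails to be subharmonic on the line $z_{0}=1$: at $|z_{1}|=1$ its $\partial\bar{\partial}$-coefficient equals $3/4-1<0$. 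Since this counterexample is a globally defined psh function on the cone with constant fibre slope, your proposed rescue --- that ``asymptotic fibre slopes of global psh functions are themselves $\omega_{0}$-psh weights'' and compensate the slope term --- is not a mechanism that exists. Any true version of the lemma must either pin the fibrewise slope to $1$ (e.g. assume $f$ is the restriction of a psh \emph{log-homogeneous} function, for which the statement is trivial), or exploit the specific structure of the functions $F_{\lambda}^{*}\chi$, close to log-homogeneous, to which $T$ is actually applied in the paper's main argument; so the statement itself, and not only your proof of it, needs repair. (A smaller slip: your maximum-principle identity $h(x)=\sup_{\{\chi_{0}\leq0\}\cap\pi^{-1}(x)}\widehat{F}$ presupposes that $\widehat{F}$ is psh on the whole disc bundle, whereas it is only given near $K$.)
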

Now performing the previous constructions for any fixed $\tau\in D$
and identifying a candidate $\Psi$ with a function $\chi$ on $Y\times D,$
as above, gives 
\begin{equation}
\chi^{\lambda}(y_{0})\leq\widehat{\chi^{\lambda}}(y_{0})=\widetilde{\left(\widehat{\chi^{\lambda}}\right)}(y_{0}):=T(\chi^{\lambda})(y_{0}).\label{eq:ineq in proof of c11 reg}
\end{equation}
But, by construction, for $\tau\in\partial D$ we have $T(\chi^{\lambda})\leq T(\chi_{\Phi_{f}}^{\lambda})$
and since $f_{\tau}$ is assumed Lipschitz for $\tau\in\partial D$
we also have that 
\[
T(\chi_{\Phi_{f}}^{\lambda})\leq T(\chi_{\Phi_{f}})+C_{1}|\lambda|=\chi_{\Phi_{f}}+C_{1}|\lambda|.
\]
 But this means that $T(\chi^{\lambda})-C_{1}|\lambda|$ is a candidate
for the sup in question and hence bounded from above by $\chi_{P\Phi_{f}}$,
which combined with the inequality \ref{eq:ineq in proof of c11 reg}
gives 
\[
\chi^{\lambda}(y_{0})-C_{1}|\lambda|\leq\chi_{\Phi_{f}}(y_{0}).
\]
 Taking the sup over all candidates $\chi$ and replacing $\lambda$
with $-\lambda$ hence gives the desired Lipschitz bound on $P\Phi_{f}$
at the given point $y_{0}$ and hence, by compactness, for any point
in $K.$ The estimate on the Hessian then proceeds precisely as above. 

Finally, in the case when $B$ is the unit-ball one can exploit that
$B$ is homogenuous (under the action of the Möbius group), replacing
the holomorphic maps $(x,\tau)\mapsto(F_{\lambda}(x),\tau)$ used
above with $(x,\tau)\mapsto(F_{\lambda}(x),G_{a}(\tau)),$ where $G_{a}$
is a suitable family of Möbius transformations (the case when $X$
is point is precisely the original situation in \cite{b-t1}). Then
the proof proceeds precisely as before.

\subsection{\label{sub:Further-remarks}Further remarks}
\begin{itemize}
\item The proof of the previous theorem also applies in the more general
situation where $f$ may be written as $f=\inf_{\alpha\in A}f_{\alpha}$
for a given family of functions $f_{\alpha},$ as long as the Hessians
of $f_{\alpha}(\tau,\cdot)$ are uniformly bounded on $X$ (by a constant
$C$ independent of $\tau$ and $\alpha))$ and similarly for the
Lipschitz bound. Indeed, then equation \ref{eq:lip bound for cand}
holds with $f$ replaced by $f_{\alpha}$ for any $\alpha\in A$ with
the same constant $C.$ For $D$ equal to a point this result has
been obtained in \cite{d-r} using a different proof.
\item As shown in \cite{b-d} (using a different pluripotential method),
in the case of a general, possibly non-integral, Kähler class $[\omega]$
a bounded Laplacian in the $X-$directions of the boundary data $f$
results in a bounded Laplacian of the corresponding envelope. In the
case of geodesics this result has also recently been obtained in \cite{he}
by refining Chen's proof.
\item By the proof of the previous theorem, the Lipschitz norm $\left\Vert u_{t}\right\Vert _{C^{0,1}(X)}$
of a weak geodesic $u_{t}$ only depends on an upper bound on the
Lipschitz norms of $u_{0}$ and $u_{1}.$ Since the Lipschitz norm
in the $t-$variable is controlled by the $C^{0}-$norm of $u_{0}-u_{1}$
\cite{bern1} it follows that the Lipschitz norm $\left\Vert U\right\Vert _{C^{0,1}(X\times A)}$
of the corresponding solution $U$ on $X\times A$ is controlled by
the Lipschitz norms of $u_{0}$ and $u_{1}$ and the $C^{0}-$norm
of $u_{0}-u_{1}.$ For a general Kähler class this result also follows
from Blocki's gradient estimate \cite{bl0,bl}. \end{itemize}

\end{document}